\newcommand{\bxi}{\boldsymbol{\xi}}
\newcommand{\be}{\mathbf{e}}
\newcommand{\bi}{\mathbf{i}}
\newcommand{\bj}{\mathbf{j}}
\newcommand{\bk}{\mathbf{k}}
\newcommand{\bx}{\mathbf{x}}
\def\cI{\mathcal{I}}
\def\cO{\mathcal{O}}
\def\cA{\mathcal{A}}
\def\cR{\mathcal{R}}
\begin{document}

\title*{Local and Dimension Adaptive Sparse Grid Interpolation and Quadrature}
\author{J.D Jakeman and S.G. Roberts}
  
\institute{J.D. Jakeman, 
	  Department of Mathematics, Purdue University, West Lafayette, IN USA 47907, \texttt{jjakeman@purdue.edu}
	  \and
	  S.G. Roberts,
	  Mathematical Sciences Institute, Australian National University, ACT Australia 0200, \texttt{stephen.roberts@anu.edu.au}
}
\maketitle

\begin{abstract}
In this paper we present a locally and dimension-adaptive sparse grid method 
for interpolation and integration of high-dimensional functions with discontinuities. 
The proposed algorithm combines the strengths of the generalised sparse grid
algorithm and hierarchical surplus-guided local adaptivity. 
A high-degree basis is used to obtain a high-order method which, 
given sufficient smoothness, performs significantly better than the piecewise-linear basis.
The underlying generalised sparse grid algorithm greedily selects the dimensions and variable interactions that 
contribute most to the variability of a function. 
The hierarchical surplus of points within the sparse grid is used as an error criterion
for local refinement with the aim of concentrating computational effort within rapidly varying or discontinuous regions. 
This approach limits the number of points that are invested in `unimportant'
dimensions and regions within the high-dimensional domain.
We show the utility of the proposed method for non-smooth functions with hundreds of variables. 

\end{abstract}

\section{Introduction}
The need for interpolation and integration of high-dimensional functions arises in many fields including 
finance, physics, chemistry, and uncertainty quantification. 
Sparse grids have emerged as an extremely useful tool to construct such 
multi-dimensional approximations. They have been extensively used for
high-dimensional interpolation~\cite{barthelmann00,gerstner98} and quadrature~\cite{bungartz03,gerstner03}
 and have been shown, under certain conditions, to obtain 
significantly higher rates of convergence than many existing methods. For example, the complexity of the Monte Carlo method with $n$ samples is
$O(n^{-1/2})$, whereas the complexity of the sparse grid method~\cite{smolyak63} is $O(n^{-r}(\log n)^{(d-1)(r+1)})$ when used to approximate
$d$-dimensional integrands which have bounded mixed partial derivatives of order $r$. 
Sparse grids achieve faster rates of convergence by taking advantage of higher smoothness and lower-effective dimensionality of the integrand.

Standard sparse grids are isotropic, treating all dimensions equally. Although
an advance on alternative methods, such approximations can still be
improved. Many problems vary rapidly in only some dimensions, remaining 
less variable in other dimensions. Consequently it is advantageous to increase the
level of accuracy only in certain highly varying dimensions, resulting in so-called
adaptive or anisotropic grids. In some cases the important dimensions can be
determined a priori, but in most cases the grid points must be chosen
during the computational procedure. 

Gerstner and Griebel~\cite{gerstner03} developed a dimension-adaptive tensor-product quadrature method
to approximate high-dimensional functions by a sum of lower-dimensional terms. The method is based
upon a generalisation of the traditional isotropic sparse grid index set that, given appropriate error estimators,
can automatically concentrate computational effort in important dimensions. 
Recently Griebel and Hotlz~\cite{griebel10} developed 
a new general class of dimension-adaptive quadrature methods. The quadrature schemes detect and exploit the low effect dimensionality
of a function. This is achieved by truncation and discretization of the anchored-ANOVA
decomposition of the function being approximated. This method has been used successfully 
to estimate high-dimensional integrals arising in finance.

In addition to dimension based adaptivity, efficiency in approximation of high-dimensional functions can also be obtained through local adaptation. 
Locally adaptive sparse grids were first use by Griebel~\cite{griebel98} to solve the solution of partial differential equation and have also been used
used to quantify uncertainty in mathematical models~\cite{ma09}, interpolate and integrate functions~\cite{pfluger10}
and scattered data approximation problem~\cite{pfluger10-thesis}. 
The refinement of the sparse grid is guided by the magnitude of the so-called
hierarchical surplus, which is the difference between the true function and the approximation at a new grid point 
before that point is used in the interpolation. When a grid point is identified for refinement, new points are invested locally around that point in every dimension.
These new points are subsequently refined. The resulting method automatically concentrates function evaluations in rapidly varying or 
discontinuous regions. 

The aforementioned locally adaptive sparse grid methods are implicitly dimension-adaptive, but often points are constructed unnecessarily 
in `unimportant' dimensions. In comparison, the generalised sparse grid algorithm~\cite{gerstner03} provides 
an efficient means of identifying the effective dimensionality of a problem and restricting function 
evaluations to that sub-dimensional space. This method performs extremely well when the solution is smooth. 
However the efficiency of the generalised sparse grid method can be significantly improved when 
only small regions of the input space contribute to the model's variability. 

In this paper
we combine local refinement with the dimension-adaptive algorithm of the generalised sparse grid method
to construct an efficient high-dimensional interpolation method. Furthermore we utilise the localised polynomial 
basis proposed by Bungartz~\cite{bungartz98} to create a higher-order method 
which achieves fast rates of convergence in smooth regions and accuracy, comparable to linear methods, 
around discontinuities. 
We coin this approach the $h$-Adaptive Generalised Sparse Grid ($h$-GSG) method. 
The convergence of the proposed method is analysed, with respect to the order of the local polynomial basis and the dimensionality of the input 
space.

\section{Adaptive sparse grids}
In this paper we will attempt to interpolate and integrate functions $f:\Omega\rightarrow \mathbb{R}$ defined on a $d$-dimensional bounded domain $\Omega$.
We need not know the closed form of $f$, we only require that the function $f$ can be evaluated at arbitrary points in $\Omega$ using a numerical code. 

\subsection{Interpolation}
\label{sec:subspaceSpiltting}
To construct an interpolant of $f$, we must first discretize $\Omega$. 
Without loss of generality let us consider functions defined on the $d$-dimensional unit hypercube $\Omega=[0,1]^d$. 
Sparse grids are a direct sum of anisotropic grids $\Omega_\bi$ on the domain $\Omega$ 
where $\bi=(i_1,\ldots,i_d)\in \mathbb{N}^d$ is a multi-index denoting the level of refinement 
of the grid in each dimension $d$.  Each grid $\Omega_\bi$ is a tensor product of one-dimensional grids 
\begin{equation}\label{eq:sg_1Dgrid}
\Omega_i=(x_{i,1},\ldots,x_{i,m_i})
\end{equation}
where $m_i$ is odd and represents the number of points $x_{i,j}$ in the $i$th level one dimensional grid. Specifically
\[\Omega_\bi=\bigotimes_{n=1}^d \Omega_{i_n} \]
which consists of the points $\bx_{\bi,\bj}=(x_{i_1,j_1},\ldots,x_{i_d,j_d})$ and 
where $\bi$ indicates the level of refinement and $\bj$ denotes the location of a given grid point. 
The exact coordinates of each point and the total number of points $m_{i_1}\times\cdots\times m_{i_d}$ 
is dependent on the type of one-dimensional grids used. 

Each grid $\Omega_\bi$ is associated with a discrete approximation space $V_\bi$ and a set of basis functions 
that span the discrete space. The types of basis functions that can be used are dependent on the type 
of one-dimensional grids employed. The most frequently used and simplest choice 
are the multi-linear piecewise basis functions~\cite{bungartz04,griebel98,ma09},
based upon the one-dimensional formula
\begin{equation*}
\Psi_{i,j}(x)=
\begin{cases} 1-(m_i-1)|x-x_{i,j} | & \text{if $|x-x_{i.j}|<h_i$}\\
0 &\text{otherwise}
\end{cases}
\end{equation*}
centered at the points
\begin{equation*}
\label{eq:equidistantPoints} 
 x_{i,j}=
\begin{cases} 
j\times h & i\ge 1\textrm{ and }0\le j\le m_i\\
0.5 & i=1          
\end{cases}
\end{equation*}
where $h_i=1/(m_i-1)$.

These 1D basis functions can be used to form a set of $d$-dimensional basis functions 
\[
 \Psi_{\bi,\bj}(\bx)=\prod_{n=1}^d \Psi_{i_n,j_n}(x_n)
\]
which span the discrete space $V_\bi$. Specifically
\begin{equation}\label{eq:sg_fullspace}
 V_\bi=\text{span}\left\{ \Psi_{\bi,\bj}\,|\, j_n=1,\ldots,m_{i_n},\, n=1,\ldots,d\right\}
\end{equation}
The spaces $V_\bi$ can be used to define hierarchical difference spaces $W_\bi$
\begin{equation}\label{eq:sg_diffspaceconst}
 W_\bi=V_\bi\setminus\bigoplus_{n=1}^d V_{\bi-\mathbf{e}_n}
\end{equation}
These spaces consist of all the basis functions $\Psi_{\bi,\bj} \in V_\bi$ with associated points $\bx_{\bi,\bj}$ 
that are not associated with any of the basis functions in spaces smaller than $V_\bi$. A discrete space $V_\bk$
is smaller than a space $V_\bi$ if $\bk\le\bi$. 
Setting $V_\bi=0$ and using~\eqref{eq:sg_fullspace} and~\eqref{eq:sg_diffspaceconst}
we obtain
\begin{equation*}
 W_\bi=\text{span}\left\{ \Psi_{\bi,\bj}\,|\, \bj\in B_\bi \right\}
\end{equation*}
where 
\begin{equation}
\label{eq:BIndexSet}
B_\bi=\{\bj:j_n=1,\ldots,m_{i_n},\, j\text{ odd } n=1,\ldots,d\}
\end{equation}
These hierarchical difference spaces can be used to decompose the input space $\Omega=V$ such that 
\begin{equation*}
 V=\bigoplus_{k_1=0}^{\infty}\cdots\bigoplus_{k_d=0}^{\infty} W_\bk=\bigoplus_{\bk\in\mathbb{R}^d}W_\bk
\end{equation*}
For numerical purposes we must truncate the number of difference spaces used to construct $V$ to some level $l$. 
Specifically the classical finite dimensional 
sparse grid space is defined by
\begin{equation}
\label{eq:sg_vspace}
V_{l,d}^{(1)}=\bigoplus_{|\bi|_1\le l}W_\bi
\end{equation}

With such a decomposition any function $f(\bx)\in V$ can be approximated by
\begin{equation}\label{eq:sgInterpolant}
 f_{l,d}(\bx)=\sum_{|\bi|_1\le l} \sum_{\bj\in B_\bi} v_{\bi,\bj} \, \Psi_{\bi,\bj}(\bx)
\end{equation}
where $v_{\bi,\bj}\in \mathbb{R}$ are the coefficient values of the hierarchical product 
basis, also known as the hierarchical surplus.

The size of the sparse grid space is 
$\lvert V_{l,d}^{(1)} \rvert=\cO(h_l^{-l}\cdot\lvert\log_2 h_l\rvert^{d-1})=\cO(2^l\cdot l^{d-1})$
which is a significant reduction on the $\cO(2^{l\cdot d})$ number of points required by the full tensor 
product space $V_{l,d}^{(\infty)}$ obtained by choosing $|\bi|_\infty=\max_{0\le n\le d} i_n\le l$.

\subsection{Quadrature}
The extension from interpolation to quadrature is straightforward. 
We can approximate  the integral of a function $f$
\begin{equation*} 
I[f(\bx)]=\int_{I_{\bx}}f(\bx)\,d\mu(\bx)
\end{equation*}
using the hierarchical sparse grid interpolant~\eqref{eq:sgInterpolant}. Utilizing this formulation 
these integrals can be approximated by
\begin{eqnarray*}
I[f_{l,d}(\bx)]&=&\int_{I_{\bx}} \sum_{|\bi|_1\le l}\sum_{\bj\in B\bi}v_{\bi,\bj}\,\Psi_{\bi,\bj}(\bx)\,d\mu(\bx)\\
&=&\sum_{{|\bi|_1\le l}}\sum_{\bj\in B\bi}v_{\bi,\bj}\, w_{\bi,\bj}
\end{eqnarray*}
where the weights 
\begin{equation*}
w_{\bi,\bj}=\int_{I_{\bx}}\Psi_{\bi,\bj}(\bx)\,d\mu(\bx)
\end{equation*}
can be calculated easily and with no need for extra function evaluations once the interpolant has been constructed.
One simply needs to store the volumes of the high-order
basis functions. For $d\mu=d\bx$ these volumes can be calculated analytically.

\subsection{A Local High-Order Basis}
\label{sec:highOrderBasis}
Sparse grids are not restricted to piecewise multi-linear basis functions that are constructed on equidistant grids. 
Various formulations exist. In the following we propose a high order local basis for interpolation and quadrature. 
This basis was first proposed by Bungartz~\cite{bungartz98} for the solution of partial differential equations. 
The local nature 
of the basis functions allows for local adaptivity and restricts the effects of Gibbs type phenomenon 
experienced by global polynomial approximation whilst still achieving polynomial convergence in smooth regions.

As with the linear case, we restrict our attention to grids $\Omega_i$ with mesh spacing  
that is equidistant with respect to each individual dimension but may vary between dimensions.
Let $\Psi^{(p)}_{i,j}$ denote a one-dimensional polynomial of degree $p$ defined on the interval $[x_{i,j}-h_i,x_{i,j}+h_i]$. This localized support is 
essential for application of the high-order basis to non-smooth problems and the ultimate goal of an adaptive method. 
Uniquely defining this basis requires $p+1$ conditions that $\Psi^{(p)}_{i,j}$ must satisfy. 

Here we take advantage of 
the fact that each point $\bx_{\bi,\bj}$ has an ancestry. 
The one-dimensional equidistant points $x_{i,j}$ can be considered as a tree-like data structure. 
The coordinate of each point is defined uniquely by the level~$i$ and the 
position~$j$. With this observation we can define a local $p$-th order polynomial using the points 
$x_{i,j}-h_i$, $x_{i,j}$, $x_{i,j}+h_i$ and the next $p-2$ closest hierarchical ancestors of $x_{i,j}$. 
\begin{definition}
Given the one-dimensional grid $\Omega_i$ with grid points defined according to \eqref{eq:equidistantPoints} 
the $p$-th order basis function $\Psi_{i,j}^{(p)}$ is the hierarchical interpolant of the point 
$x_{i,j}-h_i$, $x_{i,j}$, $x_{i,j}+h_i$ and the next $p-2$ closest hierarchical ancestors of $x_{i,j}$ restricted to 
the local support $[x_{i,j}-h_i, x_{i,j}+h_i]$. Specifically by renaming 
all the points except $x_{i,j}$ in ascending order as $x_0,\ldots,x_p$ 
the piecewise Lagrange basis can be written
\[
 \Psi^{(p)}_{i,j}(x)=\begin{cases} \prod_{k=0}^{p}\frac{x-x_k}{x_{i,j}-x_k}& \text{if $|x-x_{i,j}|<h_i$}\\
 0 &\text{otherwise}
\end{cases}
\] 
\end{definition}

 The order $p$ of the basis function is dependent
on the hierarchical level $i$ of $x_{i,j}$. For $p>2$, $p+1$ ancestors are needed to construct the basis $\Psi_{i,j}$. 
On level one,
only one ancestor ($x=0.5$, on level zero) is available and thus only linear basis functions can be used. 
On level two, only two ancestors exist, and thus only linear or quadratic basis functions can be used. 
Subsequently basis functions of degree $p$ can only 
be used when $ i \ge p$. This represents a slight modification of the approach employed by Bungartz~\cite{bungartz98}
who allowed basis functions of degree $p$ to be used when $ i \ge p-1$. Bungartz approach was designed for sparse grids
with homogeneous boundary conditions. 

Throughout the remainder of this manuscript we restrict our attention to basis functions with fixed maximum degree $p_\textrm{max}$. That is,
the order of the basis is increased with each level of the sparse grid until the order of the basis is $p_\textrm{max}-1$. 
On all subsequent levels the order of the basis is
kept constant at $p_\textrm{max}$. The tensor product construction of the multi-dimensional basis means that the degree $\mathbf{p}=(p_i,\ldots,p_d)$ of a $d$-dimensional basis function $\Psi_{\bi,\bj}$ must satisfy 
\[
 0 \le p_n =\min \{p_\textrm{max},i_n\},\quad i_n\ge0 \quad, n=1,\ldots,d
\]
Here $p_n=0$ represents the constant function
centred at the midpoint of $[0,1]$.

\subsection{Adaptivity}
\label{sec:h-adaptivity}
The classical sparse grids presented in Section~\ref{sec:subspaceSpiltting} are based upon the index set
\[
 \cI=\{\bi\in\mathbb{N}^d:\lvert\bi\rvert_1\le l\}
\]
This construction delays the curse of dimensionality by assuming that the importance of any interaction 
between a subset of a function's variables decreases as the number of variables involved in the interaction 
(interaction order) increases. Although an advance on full tensor product spaces, such approximations can still be
improved. The classical sparse grid construction treats all dimensions equally and all interactions of the same order
equally. In practice, often only a small subset of variables and interactions
contributes significantly to the variability of the function $f$. Moreover, frequently only small regions within
the input space possess high variability. In some cases the important dimensions, 
interactions and regions can be
determined a priori, but in most cases these properties must be identified during the computational procedure.

The generalised sparse grid method~\cite{gerstner03} is extremely effective at determining the dimensions
and interactions that contribute significantly
to the function variability, according to some predefined measure. However the efficiency of this method deteriorates
when a large proportion of the function variability is concentrated in small regions of the input space.
In contrast to the generalised sparse grid method, locally-adaptive methods, such as that of Ma and Zabaras~\cite{ma09}, attempt
to reduce the number of points in a sparse grid by concentrating refinement only in rapidly varying or discontinuous
regions. This method is implicitly dimension-adaptive but often points are constructed necessarily 
in `unimportant' dimensions~\cite{ma10}.

In this section we propose a method which combines the strengths of both 
local-adaptivity and the generalised sparse grid algorithm. 
We coin this approach the $h$-Adaptive Generalised Sparse Grid ($h$-GSG) method.

\subsubsection{Generalised Sparse Grid (GSG) Algorithm}
\label{sec:GSGAlgorithm}
Gerstner~\cite{gerstner03} generalised the sparse
grid construction by considering the index sets based upon the admissibility criterion
\begin{equation}
\label{eq:gsg_admissibility}
\bi-\mathbf{e}_j\in\mathcal{I}\text{ for }1\le j\le d,\, i_j > 1
\end{equation}
This so-called generalised sparse grid method~\cite{gerstner03} is extremely effective at 
determining the hierarchical difference spaces that contribute significantly
to the function variability, according to some predefined measure. 

The generalised sparse grid method is a greedy algorithm which attempts to find the index set $\mathcal{I}$ such that for a 
given number of points the approximation error is minimized. 
Starting with $\mathcal{I}=\{\mathbf{0}\}$ the index set is built iteratively by searching the forward neighbourhood of 
the current index set for new admissible indices. The forward neighbourhood of an index $\bi$ is the set of $d$ 
indices $\{\bi+\be_j:1\le j\le d\}$. Similarly, the backwards neighbourhood is just $\{\bi-\be_j:1\le j\le d\}$. 

Once the forward neighbourhood has been identified, each forward neighbour is checked for admissibility 
using~\eqref{eq:gsg_admissibility}. The grid points associated with each admissible index
are then evaluated and the error of these spaces calculated. The calculation of these errors will be addressed shortly.
The forward neighbour with the largest error is then added to the current index set $\cI$ and the set of admissible 
indices is updated.
  
To facilitate easy computation of new admissible indices we partition the index set $\cI$ 
into two disjoint sets $\cO$ and $\cA$, which Gerstner~\cite{gerstner03} refers to as the old and active index sets, 
respectively. 
The active index set $\cA$ contains all the indices in $\cI$ that have been constructed but whose forward neighbours
have not been considered. The old index set contains all the indices remaining in the current index set $\cI$. 
The algorithm proceeds by searching the forward neighbourhood of the index $\bi\in\cA$ with the largest error
for admissible indices. All the new admissible index sets are added to the active index set $\cA$ 
and the index $\bi$ is then added to the old index set $\cO$. This process is repeated until a global error 
is below a predefined tolerance $\varepsilon$. 

The exact error associated with each index $\bi$ is unknown. Consequently each time an index $\bi$ is 
deemed admissible an approximation of the error $r_\bi$ must be used. Numerous error criteria can be utilised. 
Here we employ the error measure
\[
 r_\bi=\left\lvert\sum_{\bj\in B_\bi} v_{\bi,\bj}\cdot w_{\bi,\bj}\right\rvert
\]
These index-based error criteria can be used to approximate the global error. We propose the following global error
indicator $r$
\[
 r=\sum_{\bi\in\cA} r_\bi
\]
When $r<\varepsilon$ the generalised sparse grid algorithm is terminated.
The generalised sparse grid algorithm is outlined in 
Algorithm~\ref{alg:generalisedSG}.
\begin{algorithm}
\caption{Generalised Sparse Grid Approximation}
\label{alg:generalisedSG}
\begin{algorithmic}
\STATE $\bi=(0,\ldots,0)$
\STATE $\cA:=\{\bi\}$
\STATE $\cO:=\emptyset$
\STATE $r:=r_\bi$
\WHILE {$r>\varepsilon$}
	\STATE select $\bi \in \cA$ with largest error indicator $r_{\bi}$
	\STATE $\cA:=\cA \setminus \{\bi\}$
	\STATE $\cO:=\cI\cup\{\bi\}$
	\STATE $r:=r-r_\bi$
	\FOR{$k:=1,\ldots,d$}
		\STATE $\bj=\bi+\mathbf{e}_k$
		\IF {$\bj-\mathbf{e}_n \in O \quad \forall\; n=1,\ldots,d$}
			\STATE $\cA:=\cA\cup\{\bj\}$
			\STATE CreateGrid($\bj$)
			\STATE $r:=r+r_\bj$
		\ENDIF
	\ENDFOR
\ENDWHILE
\end{algorithmic}
\end{algorithm} 

Three steps of the generalised sparse grid algorithm depicting the construction of the sparse grid index set
are shown in Figure~\ref{fig:gridIndexing}. The top row represents the current index sets. The
bottom row depicts the corresponding sparse grid. At each step the forward neighbours of the grid index $\bi$ 
 with the largest error $r_\bi$ (striped box) are 
checked for admissibility. A forward neighbour is admissible if all indices in its backwards neighbourhood are in 
the old index set (grey boxes). All admissible indices (pointed to by an arrow) are added to the active index set (black and striped boxes).

The striped box $\bi=(1,1$) in the first step has two admissible neighbours as the backwards neighbourhoods of both
forward neighbours are complete. In comparison the striped box $\bi=(2,1)$ in the second step only has one admissible 
index. The index $\bj_1=(3,1)$ has two backwards neighbours $\bj_1-\be_1=(0,2)$ and $\bj_1-\be_2=(1,1)$ in the old index set,
and thus is admissible. In contrast the index $\bj_2=(2,2)$ has one backwards neighbour in the old index set $\bj_2-\be_1=\bj_2$
and one in the active set $\bj_2-\be_2=(2,1)$, and so is not admissible.

\begin{figure}
\begin{center}
\includegraphics[width=0.85\textwidth]{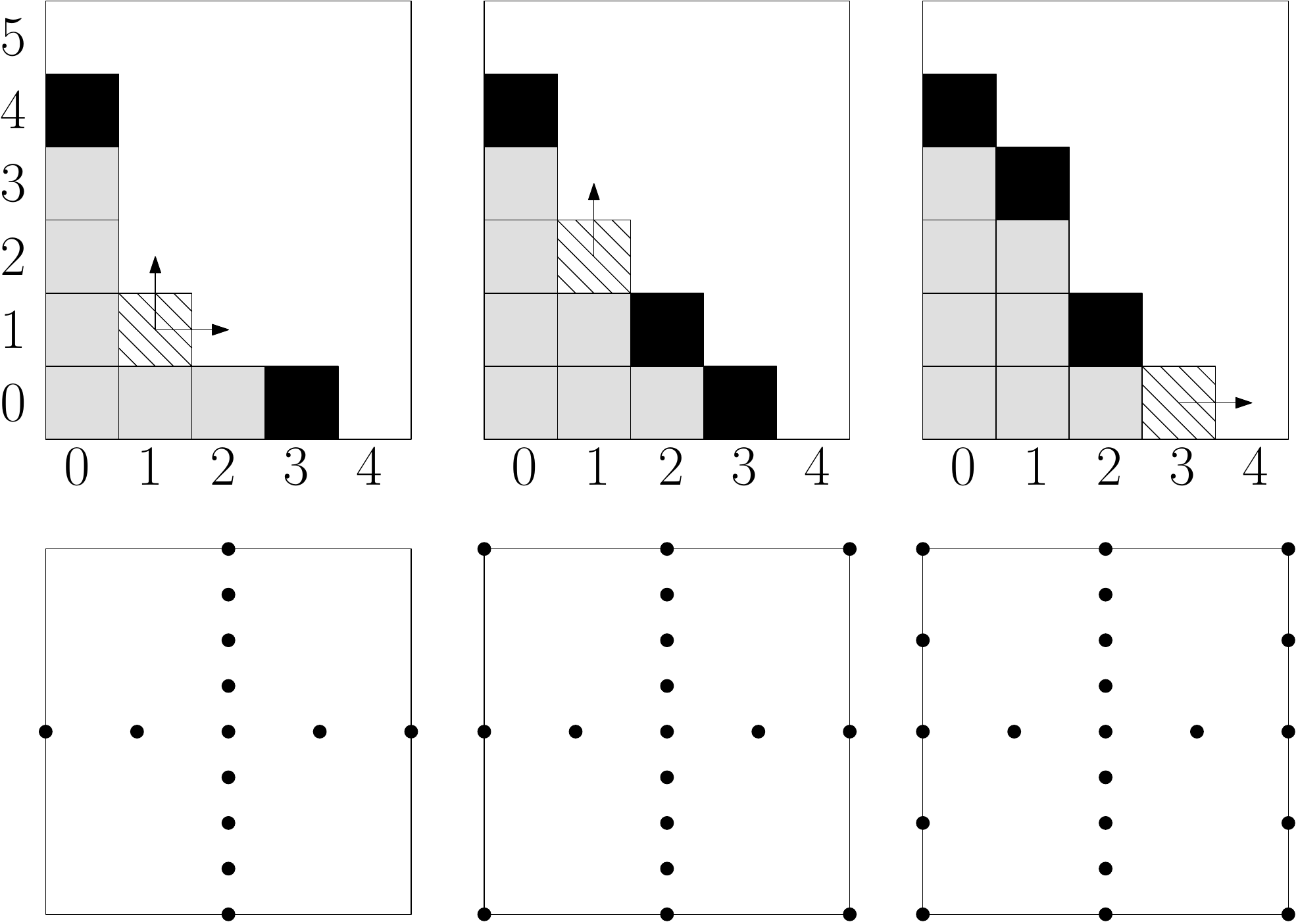}
\caption{Three steps of the generalised sparse grid algorithm. The top row represents the current index sets. 
 Active grid indices are in black, 
indices in the old index set $\cO$ are in grey and the active index with the largest error indicator is
striped. The bottom row depicts the corresponding sparse grid. Only the  points associated with indices in the old index
set are shown.}

\label{fig:gridIndexing}
\end{center}
 \end{figure}

Although we wish to use the generalised sparse grid algorithm for interpolation the error criteria we have proposed
are based upon an integral formulation. Specifically the error indicator $r_\bi$ measures the contribution of
the index $\bi$ to the global integral approximation. Furthermore the algorithm is terminated when the approximated 
error $r$ in the integral is below a predefined threshold $\varepsilon$. This choice was made purposefully. 

The magnitude of the hierarchical surplus, which is the size of the difference between the true function and the sparse
grid approximation at a grid point, may be more synonymous with interpolation. Simply adding indices with 
large hierarchical surpluses, however, is inefficient.  The magnitude of the hierarchical surpluses decays slowly in regions adjacent 
to discontinuities. At the site of jump discontinuities the hierarchical surplus will be at best half of the magnitude of the jump, for any finite number of grid points. Thus the algorithm can proceed much further than is
necessary. The use of the error criterion $r_\bi$ provides a lower bound on the size of the support of 
the basis functions used by weighting the magnitude of the hierarchical surplus by the probability that an arbitrary point $\bx$
will fall within its support.

\subsubsection{Regional Adaptivity}

Each time a grid index $\bi$ is added to the active index set, the traditional generalised sparse grid algorithm
evaluates all the points in the set $B_\bi$. 
Such an approach is inefficient if a large proportion of the function variability is 
concentrated in small regions of the input space. When using equidistant grids, the creation of the grid $\bi$ 
requires approximately two times the number of grid points (and thus function evaluations) than those necessary
to construct the index $\bi-\be_j$. 
Consequently we propose introducing a locally adaptive procedure to construct the points associated with each grid index.
\begin{figure}
\begin{center}
 \includegraphics[width=0.5\textwidth]{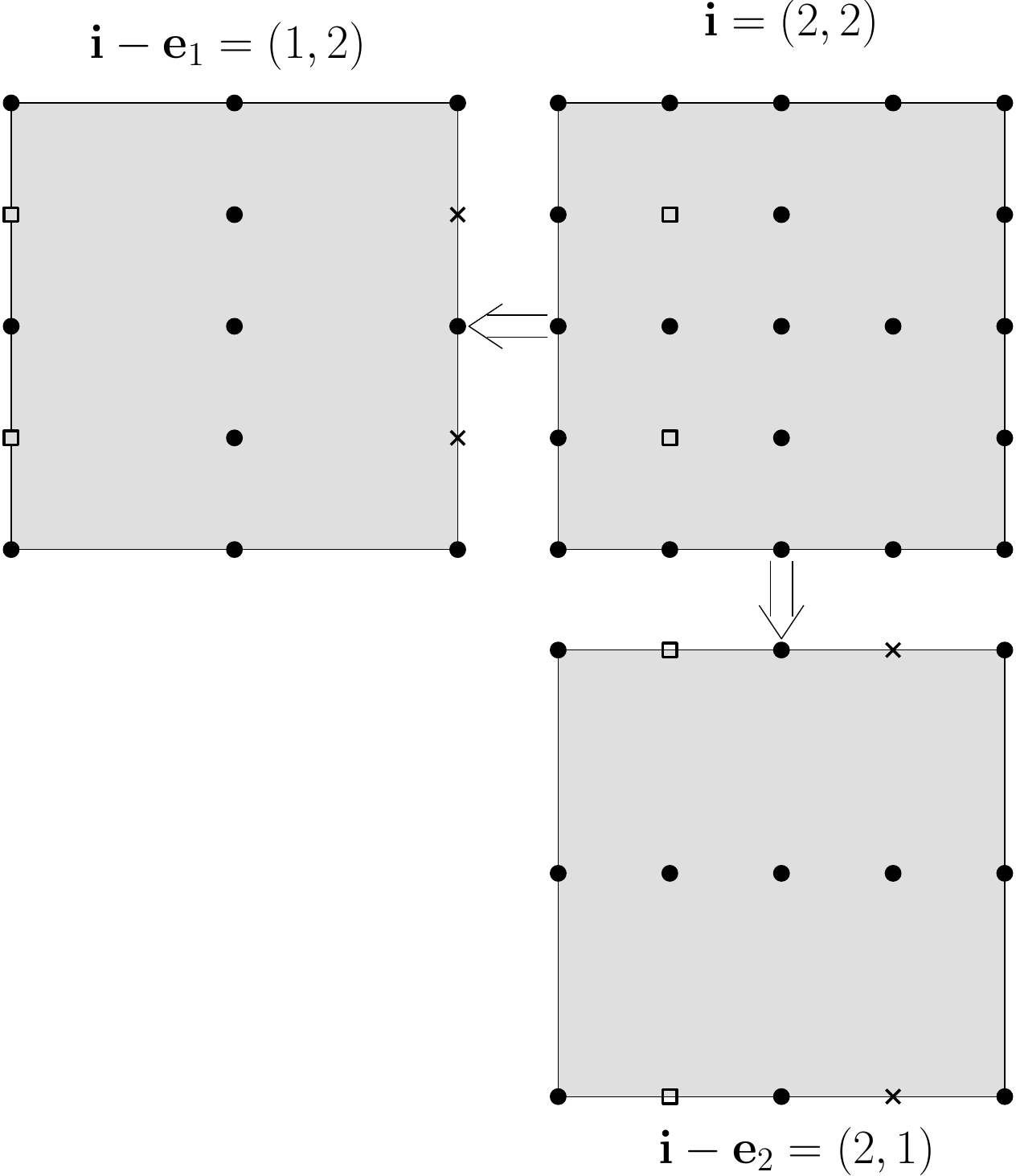}
\caption{An example of local adaptation integrated with the generalised sparse grid algorithm. Assume
that the function only varies significantly in the left half of the domain. 
The top left and bottom right grids are the backwards neighbours of the grid being created. 
Circles represent points in the sparse grid, 
squares are points in the active point sets, and crosses are points in the redundant point sets. 
The active (square) points in the backwards neighbours are refined to produce the set of new points
that must be added. In this example only two new points (squares in grid $\bi=(2,2)$) are added. }

\label{fig:hGSGPointUpdate}
\end{center}
 \end{figure}
\label{sec:hGSGAdaptivity}
To incoporate local adaptivity into the generalised sparse grid algorithm we define the two sets $\cA_\bi$ and 
$\cR_\bi$ for each grid index $\bi$. We refer to these sets respectively as the active point set and redundant point set of the
grid index $\bi$. The active point set $\cA_\bi$ contains all admissible points associated with the index $\bi$ 
with an error indicator $\gamma_{\bi,\bj}\ge\varepsilon$. The redundant point set $\cR_\bi$
contains all admissible points with $\gamma_{\bi,\bj}<\varepsilon$. A point is admissible if one of its $d$ possible 
ancestors exists in the grids associated with the backwards neighbourhood of $\bi$.
If, and only if, a grid point is admissible it is created (the function is evaluated) and the error indicator 
$\gamma_{\bi,\bj}$ calculated. This drastically reduces the number of points generated when a new grid index is created.

To guide local refinement we propose using the error indicator
\begin{equation}
\label{eq:absolute-error-criterion}
\gamma_{\bi,\bj}=\lvert v_{\bi,\bj}\cdot w_{\bi,\bj}\rvert 
\end{equation}
If $\gamma_{\bi,\bj}\ge\varepsilon$ the point $\bx_{\bi,\bj}$ is added to the active point set $\cA_\bi$, 
otherwise it is added to the redundant index set $\cR_\bi$. 
The procedure used to implement $h$-adaptivity on each grid index is outlined in Algorithm~\ref{alg:createGrid}.
\begin{algorithm}
\caption{CreateGrid($\bi$)}
\begin{algorithmic}
\label{alg:createGrid}
\FOR{($n\in\{1,\ldots,d\}$)}
  \FOR {($\bx_{\bi-\be_n,\bj} \in \cA_{\bi-\be_n}$ )}
	\STATE $\mathcal{C}$=FindAxialChildren($\bx_{\bi-\be_n,\bj}$,$n$)
	\FOR {( $\bx_{\bi,\bk}\in\mathcal{C}$ )}		
		\IF {( $ \gamma_{\bi,\bk} \ge \varepsilon$ )}
			\STATE $\cA_{\bi}:=\cA_\bi\cup\{\bx_{\bi,\bk}\}$
		\ELSE 
			\STATE $\cR_\bi:=\cR_\bi\cup\{\bx_{\bi,\bk}\}$
		\ENDIF
	\ENDFOR
  \ENDFOR
\ENDFOR
\end{algorithmic}
\end{algorithm}
 
Figure~\ref{fig:hGSGPointUpdate} shows an example of $h$-adaptivity integrated with the generalised sparse grid 
algorithm. Here the grid index $\bi=(2,2)$ has been deemed admissible by the generalised sparse grid algorithm.
Both the backwards neighbours ($\bi-\be_1=(1,2)$ and $\bi-\be_1=(2,1)$) exist in the old index set $\cO$. The active points in the backwards
neighbours are used to determine which points in the active index $\bi$ should be evaluated. For any point in the 
active set of the $n$-th backwards neighbour $n=1,\ldots,d$ the children of that point are created in the $n$-th
axial direction. This refinement is carried out for all points in the active point set of the backward neighbour and
for all backwards neighbours.   

\subsubsection{Efficient Termination}
\label{sec:efficientTermination}
The generalised sparse grid (GSG) algorithm is a greedy algorithm which efficiently identifies the sparse grid index
set that is necessary to interpolate a function up to a level of predefined accuracy. The algorithm can determine
the number of variable interactions and the individual importance of each variable~\cite{griebel10}. This is achieved
by successively adding the grid index with the largest error indicator to the old index set and 
searching its forward neighbourhood for admissible indices. Every admissible index is added (and thus created) to the active 
index set without regard for the error associated with that grid. 
The algorithm finally terminates when $\sum_{\bi\in\cA} r_\bi<\varepsilon$. 

The decision to add all indices $\bi$, regardless
of the size of their associated error indicator $r_\bi$, typically results in the creation of a large number of
grids with $r_\bi<<\varepsilon$ and which have little effect on the accuracy of the approximation. To reduce
the number of these unimportant indices we propose only adding admissible indices with $r_\bi\ge\varepsilon$ to
the active index set $\cA$. This significantly reduces the number of grid indices in the final index set $\cI$
and thus the total number of function evaluations, with only minor effect on the overall accuracy of the 
generalised sparse grid method.
\section{Error Analysis}
\label{sec:sgErrorAnalysis}
In this section we derive a bound on the error of the proposed $h$-GSG method. 
For ease of discussion let us rewrite~\eqref{eq:sgInterpolant} in the following form
\begin{equation}
\label{eq:sg_representation2}
f_{l,d}(\bxi)=\sum_{\bi\le l} f_\bi(\xi),\quad f_\bi(\xi)=\sum_{\bj\in B_\bi} v_{\bi,\bj}^{(p)}\cdot \Psi_{\bi,\bj}^{(p)} \in W_\bi
\end{equation}

The proposed $h$-GSG algorithm terminates when all points in the sparse grid with an error indicator  $\gamma_{\bi,\bj}\ge\varepsilon$
have been considered. This truncation of the sparse grid space has an effect on the accuracy of the approximation.
This effect is quantified by the following theorem. 
\begin{theorem}
\label{thm:optErr}
Let $f_{\varepsilon,\mathrm{opt}}$ be an interpolation of $u$ that obtains $\lVert f-f_{\varepsilon,\mathrm{opt}}\rVert_q\le\varepsilon$ 
with the least number of function evaluations. Then for any function $u$ and a given tolerance $\varepsilon>0$  and the error criterion $\gamma_{\bi,\bj}=\left\lVert v_{\bi,\bj}\cdot \Psi_{\bi,\bj}\right\rVert_q $, 
the $h$-GSG approximation $f_{\varepsilon,d}$ satisfies
\[
 \lVert f-f_{\varepsilon,d} \rVert_q\le\varepsilon\left(1+N(\varepsilon)\right)
\]
where $N(\varepsilon)$ 
is the number of points in the optimal interpolant but not in the  $h$-GSG interpolant.
\end{theorem}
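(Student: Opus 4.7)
The plan is a direct triangle-inequality argument combined with a careful accounting of the hierarchical surpluses. First I would split the error as
\[
\lVert f - f_{\varepsilon,d}\rVert_q \;\le\; \lVert f - f_{\varepsilon,\mathrm{opt}}\rVert_q \;+\; \lVert f_{\varepsilon,\mathrm{opt}} - f_{\varepsilon,d}\rVert_q,
\]
so that the first term is bounded by $\varepsilon$ by the defining property of $f_{\varepsilon,\mathrm{opt}}$. All the work then goes into the second term.

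Next, using the hierarchical representation \eqref{eq:sg_representation2}, I would write each interpolant as a sum of surplus contributions $v_{\bi,\bj}\Psi_{\bi,\bj}$ indexed by its grid points, and identify the telescoping cancellation of terms common to both sets. Because the GSG admissibility criterion forces all hierarchical ancestors of any included point to be present, the surpluses at shared points agree in both interpolants. The difference $f_{\varepsilon,\mathrm{opt}} - f_{\varepsilon,d}$ therefore reduces to a sum of exactly those surplus terms whose points appear in $f_{\varepsilon,\mathrm{opt}}$ but not in $f_{\varepsilon,d}$ (up to an analogous contribution from points the $h$-GSG retains but the optimal interpolant does not; the latter terms can only be present if they do not improve the optimal count, so after a short argument they can be absorbed).

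Applying the triangle inequality to this sum gives
\[
\lVert f_{\varepsilon,\mathrm{opt}} - f_{\varepsilon,d}\rVert_q \;\le\; \sum_{(\bi,\bj)\,\text{missing}} \lVert v_{\bi,\bj}\,\Psi_{\bi,\bj}\rVert_q \;=\; \sum_{(\bi,\bj)\,\text{missing}} \gamma_{\bi,\bj}.
\]
By the construction of the $h$-GSG method, any admissible point not included in $f_{\varepsilon,d}$ must have been placed in some redundant set $\cR_\bi$, which by Algorithm~\ref{alg:createGrid} means $\gamma_{\bi,\bj}<\varepsilon$. Hence each of the $N(\varepsilon)$ missing terms is strictly less than $\varepsilon$, and the sum is bounded by $N(\varepsilon)\,\varepsilon$. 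Combining this with the first triangle inequality yields the claim.

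The main obstacle I anticipate is the bookkeeping in the second step: one must justify that the surpluses at shared points really do coincide across the two interpolants, and that every point in $f_{\varepsilon,\mathrm{opt}} \setminus f_{\varepsilon,d}$ has indeed been tested by the $h$-GSG algorithm (so that the rejection condition $\gamma_{\bi,\bj}<\varepsilon$ applies). The first issue is handled by the admissibility condition \eqref{eq:gsg_admissibility}, which guarantees that the hierarchical ancestors needed to define $v_{\bi,\bj}$ are present whenever a point is admitted. The second issue requires observing that, for the optimal interpolant to include $(\bi,\bj)$, its ancestors are also present in the optimum and hence these ancestors lie in the intersection of the two grids, so the $h$-GSG algorithm would indeed have reached and evaluated $(\bi,\bj)$ and only discarded it because $\gamma_{\bi,\bj}<\varepsilon$. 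Beyond this, the argument is a routine application of the triangle inequality.
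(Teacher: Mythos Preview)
Your overall strategy---triangle inequality plus cancellation of the common hierarchical terms---matches the paper's. The gap is in how you dispose of the points that the $h$-GSG interpolant retains but the optimal one does not, i.e.\ the set $P_{\varepsilon,d}^{\mathrm{unique}}$. You write that these ``can only be present if they do not improve the optimal count, so after a short argument they can be absorbed,'' but no such argument is available: by construction every point in $P_{\varepsilon,d}^{\mathrm{unique}}$ satisfies $\gamma_{\bi,\bj}\ge\varepsilon$, so these surplus terms are \emph{not} individually small, and the fact that the optimal interpolant omits them says nothing about their size. With your straight triangle inequality $\lVert f-f_{\varepsilon,d}\rVert_q\le\lVert f-f_{\varepsilon,\mathrm{opt}}\rVert_q+\lVert f_{\varepsilon,\mathrm{opt}}-f_{\varepsilon,d}\rVert_q$, the second term genuinely contains these large contributions and you cannot reach $N(\varepsilon)\,\varepsilon$.

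The paper avoids this by grouping differently. After cancelling the common part it writes
\[
f-f_{\varepsilon,d}=\bigl(f-f_{\varepsilon,\mathrm{opt}}-f_{\varepsilon,d}^{\mathrm{unique}}\bigr)+f_{\varepsilon,\mathrm{opt}}^{\mathrm{unique}},
\]
so that the potentially large $f_{\varepsilon,d}^{\mathrm{unique}}$ is absorbed into the first bracket rather than bounded on its own. The paper then bounds $\lVert f-(f_{\varepsilon,\mathrm{opt}}+f_{\varepsilon,d}^{\mathrm{unique}})\rVert_q\le\varepsilon$ by appealing to the definition of the optimal interpolant (implicitly: enlarging a hierarchical interpolant that already achieves error $\le\varepsilon$ does not push the error above $\varepsilon$), and bounds $\lVert f_{\varepsilon,\mathrm{opt}}^{\mathrm{unique}}\rVert_q$ term-by-term exactly as you do, using that each missing point has $\gamma_{\bi,\bj}<\varepsilon$. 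Your discussion of admissibility and ancestors to justify that missing points were actually tested and rejected is a useful elaboration the paper leaves implicit, but the key regrouping step is what your plan is lacking.
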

\begin{proof}
Let 
\[
f_{\varepsilon,\mathrm{opt}}=\sum_{(\bi,\bj)\in P_{\varepsilon,\mathrm{opt}}} v_{\bi,\bj}\cdot\Psi_{\bi,\bj}
\]
be an interpolation of $f$ that obtains $\lVert f-f_{\varepsilon,\mathrm{opt}}\rVert_q\le\varepsilon$.
The points in this optimal approximant are defined by the index set
\[
 P_{\varepsilon,\mathrm{opt}}:=\{(\bi,\bj)\;:\;\bi\in\cI_{\varepsilon,\mathrm{opt}}\;\mathrm{and}\;\bj\in B_{\bi,\varepsilon,\mathrm{opt}}\subseteq B_{\bi}\}
\]
Similarly denote the $h$-GSG interpolant with an error indicator $\gamma_{\bi,\bj}$ by
\[
f_{\varepsilon,d}=\sum_{(\bi,\bj)\in P_{\varepsilon,d}} v_{\bi,\bj}\cdot\Psi_{\bi,\bj}
\]
where the point indices in the $h$-GSG approximant are 
\[
 P_{\varepsilon,d}:=\{(\bi,\bj)\;:\;{\bi\in\cI_{\varepsilon,d}},\;\bj\in B_\bi\;\mathrm{and}\;|v_{\bi,\bj}|\ge\varepsilon\}
\]
Now denote $P^\mathrm{common}:= (P_{\varepsilon,d}\bigcap P_{\varepsilon,\mathrm{opt}})$ the set of indices common to both the 
optimal and $h$-GSG approximants and denote $P^\mathrm{unique}:= ((P_{\varepsilon,d}\bigcup P_{\varepsilon,\mathrm{opt}})
\setminus P^\mathrm{common})$ the set of indices that exist only in $ P_{\varepsilon,\mathrm{opt}}$ or $P_{\varepsilon,d}$.

We can split $P^\mathrm{unique}$ further into $P_{\varepsilon,\mathrm{opt}}^{\mathrm{unique}}$ and 
$P_{\varepsilon,d}^{\mathrm{unique}}$ which are points unique to the optimal approximant and the $h$-GSG approximants 
respectively. Using this splitting and the linearity of the hierarchical interpolants $f_{\varepsilon,\mathrm{opt}}$ 
and $f_{\varepsilon,d}$, yields
\[
 f_{\varepsilon,\mathrm{opt}}=f_{\varepsilon,\mathrm{opt}}^{\mathrm{unique}} + f^{\mathrm{common}}\quad\mathrm{and}\quad
f_{\varepsilon,d}=f_{\varepsilon,d}^{\mathrm{unique}}+f^{\mathrm{common}}
\]
where
\[
 f_{\varepsilon,\mathrm{opt}}^{\mathrm{unique}}=\sum_{(\bi,\bj)\in P_{\varepsilon,\mathrm{opt}}^{\mathrm{unique}}} v_{\bi,\bj}\cdot\Psi_{\bi,\bj}
 \quad,\quad f_{\varepsilon,d}^{\mathrm{unique}}=\sum_{(\bi,\bj)\in P_{\varepsilon,d}^{\mathrm{unique}}} v_{\bi,\bj}\cdot\Psi_{\bi,\bj} 
\]
and
\[
f^{\mathrm{common}}=\sum_{(\bi,\bj)\in P^{\mathrm{common}}} v_{\bi,\bj}\cdot\Psi_{\bi,\bj} 
\]
Using these definitions we can write
\begin{eqnarray}
\label{eq:optDecomp}
 \lVert f-f_{\varepsilon,d} \rVert_q&=& \lVert f-f_{\varepsilon,\mathrm{opt}} +  f_{\varepsilon,\mathrm{opt}}-f_{\varepsilon,d} \rVert_q\nonumber\\
 &=&\lVert f-f_{\varepsilon,\mathrm{opt}} +  
 (f_{\varepsilon,\mathrm{opt}}^{\mathrm{unique}} + f^{\mathrm{common}})-(f_{\varepsilon,d}^{\mathrm{unique}}+f^{\mathrm{common}}) \rVert_q\nonumber\\
 &\le& \lVert f-(f_{\varepsilon,\mathrm{opt}}+f_{\varepsilon,d}^{\mathrm{unique}})\rVert_q+\lVert f_{\varepsilon,\mathrm{opt}}^{\mathrm{unique}}\rVert_q
\end{eqnarray}
Assuming that the adaptivity of the $h$-GSG method works perfectly, that is 
\[
 \gamma_{\bi,\bj}=\left\lVert v_{\bi,\bj}\cdot \Psi_{\bi,\bj}\right\rVert_q \le \varepsilon,\quad\forall\; (\bi,\bj)\notin P_{\varepsilon,d}^{\mathrm{unique}} 
\]
then
\begin{eqnarray}
  \lVert f_{\varepsilon,\mathrm{opt}}^{\mathrm{unique}}\rVert_q&=&
  \left\lVert\sum_{(\bi,\bj)\in P_{\varepsilon,\mathrm{opt}}^{\mathrm{unique}}} v_{\bi,\bj}\cdot\Psi_{\bi,\bj}\right\rVert_q\nonumber\\
  &\le&\sum_{(\bi,\bj)\in P_{\varepsilon,\mathrm{opt}}^{\mathrm{unique}}} \left\lVert v_{\bi,\bj}\cdot\Psi_{\bi,\bj}\right\rVert_q\nonumber\\
\label{eq:optErr}
&\le&\#(P_{\varepsilon,\mathrm{opt}}^{\mathrm{unique}})\cdot \varepsilon
\end{eqnarray}
By definition of the optimal interpolant
\begin{equation}
\label{eq:hgsgErr} 
\lVert f-(f_{\varepsilon,\mathrm{opt}}+f_{\varepsilon,d}^{\mathrm{unique}})\rVert_q\le\varepsilon
\end{equation}
Setting $N(\varepsilon)=\#(P_{\varepsilon,\mathrm{opt}}^{\mathrm{unique}})$
we arrive at the assertion.\qed
\end{proof}
Theorem~\ref{thm:optErr} states that the accuracy of the $h$-GSG interpolant is dependent on the number of points with cumulative
$\gamma_{\bi,\bj}<\varepsilon$ that are not in the approximation but have $\gamma_{\bi,\bj}\approx\varepsilon$. 
The exact number $N(\varepsilon)$ of these points is dependent on the smoothness of the function being approximated. 
The smoother the function, that is the faster the
hierarchical coefficients decay, the smaller $N(\varepsilon)$ will be.
\section{Numerical Study}
\label{sec:SGNumericalStudy}
In this section we investigate the performance of the proposed $h$-GSG method when applied to a number of numerical examples. 
We analyze convergence, with respect to the order of the local polynomial basis and the dimensionality for functions of varying smoothness. 
First we consider a set of two-dimensional functions which
visually illustrates the effect of the choice of basis degree and the performance of local adaptivity. 
We then discuss the performance of different basis functions 
when applied to functions of differing regularity. 
The effect of the termination condition presented in Section~\ref{sec:efficientTermination} is also presented. 
Finally the utility of the proposed method is shown for high-dimensional 
approximation with hundreds of variables.

In the following we will consider the following four functions:
\begin{equation}
\label{eq:lineSing}
f_1(\bxi)=\frac{1}{\lvert 0.3-\xi_1^2-\xi_2^2\lvert+0.1},\quad \bxi\in[0,1]^2
\end{equation}
\begin{equation}
\label{eq:f3}
f_2^d(\boldsymbol{\xi})=\exp\left(-\sum_{i=1}^dc_i^{2}(\xi_i-w_i)^2\right),\quad \bxi\in[0,1]^d
\end{equation}
\begin{equation}
\label{eq:f4}
f_3^d(\boldsymbol{\xi})=\exp\left(-\sum_{i=1}^dc_i|\xi_i-w_i|\right),\quad \bxi\in[0,1]^d
\end{equation}
\begin{equation}
\label{eq:f5}
f_4^d(\boldsymbol{\xi})=
\begin{cases}
0 & \text{if $\xi_1>w_1$ or $\xi_2>w_2$}\\
\exp\left(\sum_{i=1}^dc_i\xi_i\right) & \text{otherwise}
\end{cases},\quad \bxi\in[0,1]^d
\end{equation}
Unless otherwise stated, the coefficients $w_i=0.5$, $i=1,\ldots,d$.  The choice of $c_i$
determines the effective dimensionality of the function and is defined differently for each problem.
Although smooth, the mixed derivatives of the Gaussian function $f_2^d$  
can become large and thus degrade performance if not compensated for by appropriate adaptivity. The discontinuities
in functions $f_3^d$ and $f_4^d$ also degrade, with increasing magnitude, the efficiency of isotropic methods and 
subsequently can highlight the strengths and weaknesses of any interpolation method.

In the following we will analyze convergence with respect to the following measures:
\[
 \varepsilon_{\ell^\infty}=\max_{i=1,\ldots,N}|f(\xi_i)-g(\xi_i)|
\]
\[
 \varepsilon_{\ell^2}=\left(\frac{1}{N}\sum_{i=1}^{N}|f(\xi_i)-g(\xi_i)|^2\right)^{1/2}
\]
where $f$ and $g$ are the true function and approximation respectively. In all the following examples $N=1000$. 
Error in the quadrature rule $I_\mathrm{approx}$ is also considered 
and measured by
\[
 \varepsilon_\mathrm{integral}=\frac{I_\mathrm{approx}-I_\mathrm{exact}}{I_\mathrm{exact}}
\]
where $I_\mathrm{exact}$ is the exact integral. Unless otherwise stated this value is calculated analytically.

\subsection{A two-dimensional example}
Let us first consider two low dimensional functions, \eqref{eq:lineSing} and \eqref{eq:f4}, defined on the unit hypercube $[0,1]^2$. 
Figure~\ref{fig:genz-f4-d2} depicts the grids generated when the proposed method is applied to the piecewise continuous function~\eqref{eq:f4} using linear basis functions~(a) 
and quadratic basis functions~(b).
This function has discontinuities in its
first derivatives along $\xi_1=0.5$ and $\xi_2=0.5$. The linear $h$-GSG grid concentrates grid points around the rapidly varying region associated with the
discontinuous change in the derivative information. In comparison the quadratic basis requires significantly less 
function evaluations. The quadratic basis is able to obtain second order convergence in each of the four 
smooth quadrants whilst still approximating well at the discontinuities. 
Here $c_i=10/2^{i+2}$, and the absolute error criterion~\eqref{eq:absolute-error-criterion} is used with $\varepsilon=10^{-6}$.

The function~\eqref{eq:f4} possesses discontinuities that lie along the axial directions.
Let us now consider function~\eqref{eq:lineSing} which possesses a singularity that passes through both axial directions. 
Figure~\ref{fig:lineSing} depicts
the grids obtained using a tolerance of $\varepsilon=10^{-6}$ and
linear~(a) and quadratic basis functions~(b). The linear $h$-GSG grid concentrates grid points around the rapidly varying region associated with the
discontinuous change in the derivative information. Unlike the previous example it is now unclear whether the use of the quadratic basis function results in increased
efficiency. The accuracy of the linear basis functions is higher ($\varepsilon_{\ell^2}=3.19\cdot 10^{-3}$) 
than when the quadratic basis is used ($\varepsilon_{\ell^2}=1.15\cdot10^{-2}$) but the linear method requires almost three times as many points. The effect of varying the degree of 
the local basis is discussed in the following section. 

\begin{figure}[htp]
\centering
\begin{subfigmatrix}{2}
\subfigure[$p=1$, ($2477$ points, $\varepsilon_{\ell^2}=1.18\cdot10^{-4}$)]{\includegraphics[width=0.49\textwidth]{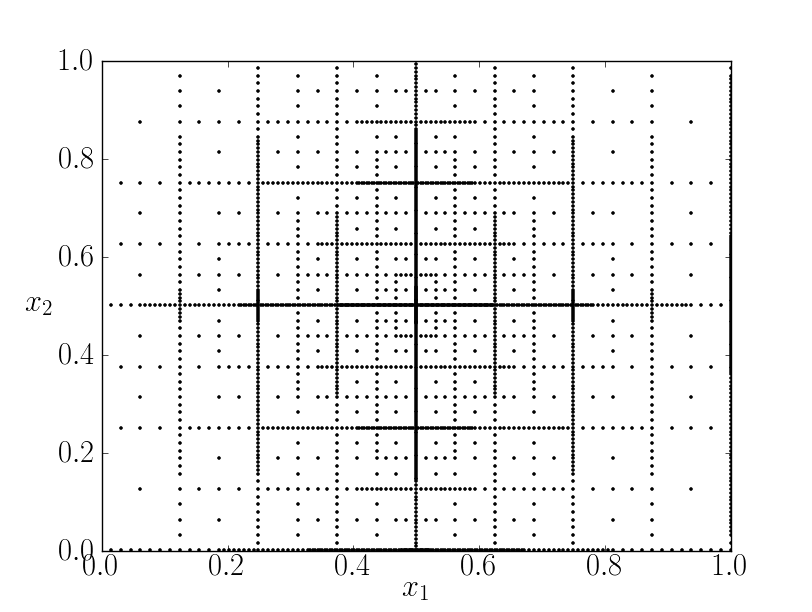}}
\subfigure[$p=2$, ($1257$ points, $\varepsilon_{\ell^2}=4.67\cdot10^{-5}$)]{\includegraphics[width=0.49\textwidth]{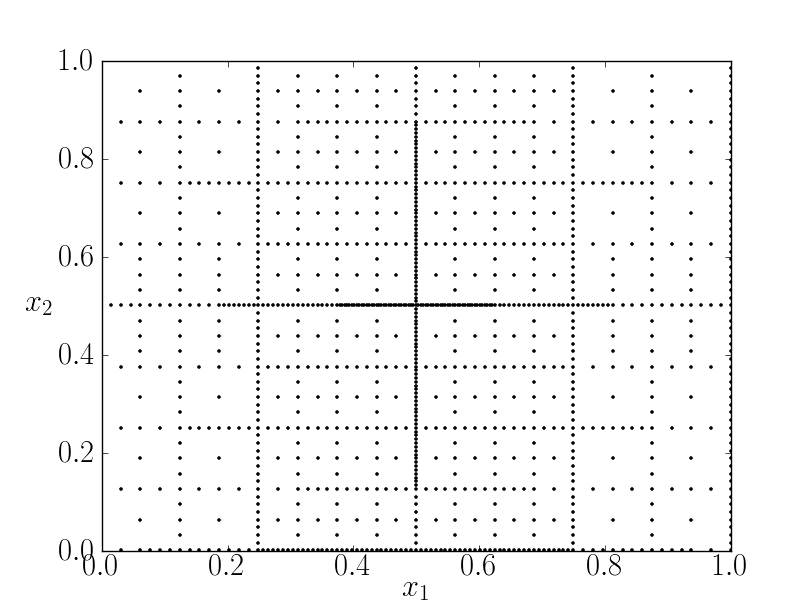}}
\end{subfigmatrix}
\caption{The adaptive grids obtained using basis of varying degree. Here the error criterion~\eqref{eq:absolute-error-criterion} is used with $\varepsilon=10^{-6}$.}
\label{fig:genz-f4-d2}
\end{figure}

\begin{figure}[htp]
\centering
\begin{subfigmatrix}{2}
\subfigure[$p=1$, ($9127$ points, $\varepsilon_{\ell^2}=3.19\cdot 10^{-3}$)]{\includegraphics[width=0.49\textwidth]{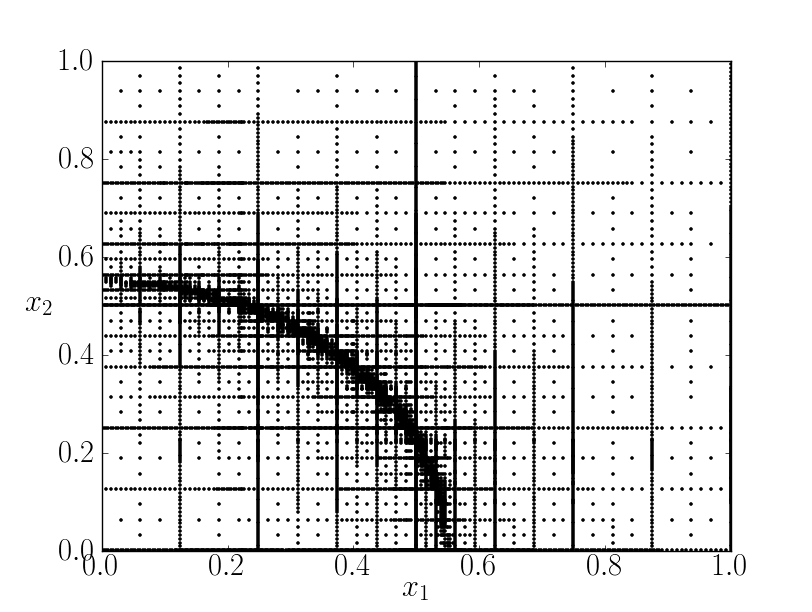}}
\subfigure[$p=2$, ($N=3980$, $\varepsilon_{\ell^2}=1.15\cdot10^{-2}$)]{\includegraphics[width=0.49\textwidth]{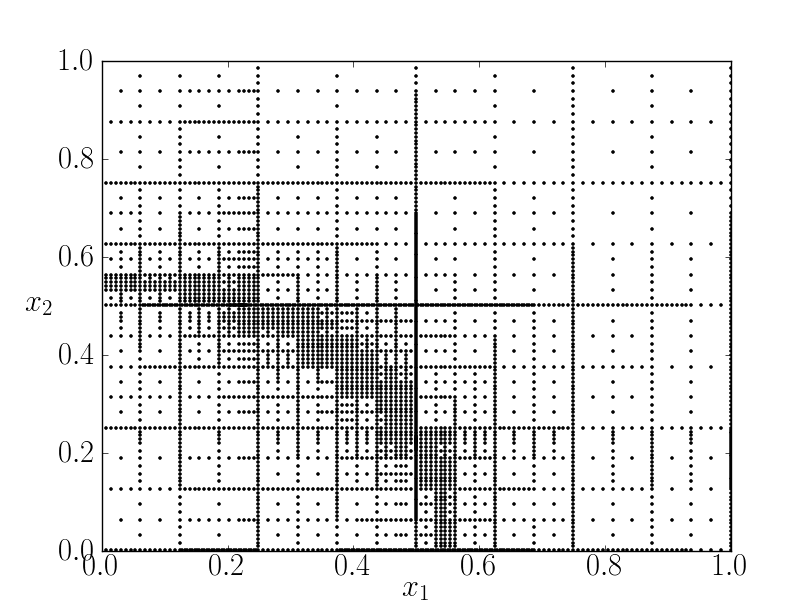}}
\end{subfigmatrix}
\caption{The adaptive grids obtained using basis of varying degree. Here the error criterion~\eqref{eq:absolute-error-criterion} is used with $\varepsilon=10^{-6}$.}
\label{fig:lineSing}
\end{figure}

\subsection{Increasing the Degree of the Local Polynomial Basis}
Let us now consider some moderate-dimensional integrals ($d=10$) and discuss the effect of the degree of the local polynomial
basis on the efficiency of the proposed method. Setting
$c_i=1/2^{i+2}$,
Figure~\ref{fig:degree-comparison}
compares the rates of convergence with respect to the tolerance $\varepsilon$ when the proposed method is applied to the 
three test functions~\eqref{eq:f3}-\eqref{eq:f5}.
Figure~\ref{fig:degree-comparison}~(a) illustrates convergence with respect to the 
$\varepsilon_{\ell^\infty}$ measure for the smooth function $f_2^{d=10}$. 
In this case the effect of the higher-degree basis is clearly
evident. The quadratic basis provides drastic improvement over the standard piecewise-linear basis and the quartic 
basis provides a further increase in efficiency.  

This result is mirrored when $h$-GSG is applied to the piecewise-continuous
function $f_3^{d=10}$~\eqref{eq:f4} (Figure~\ref{fig:degree-comparison}~(b)). However when a jump discontinuity is present (function~\eqref{eq:f5} $f_4^{d=10}$) the 
performance of the quartic basis functions is reduced (Figure~\ref{fig:degree-comparison}~(c)).

The quadratic basis significantly increases the accuracy of the $h$-GSG method for smooth and discontinuous functions.
Higher order basis functions $p>2$ provide further increases in the rate of convergence obtained for smooth problems,
but performance is degraded for discontinuous problems. These results are reproduced when higher-dimensional realizations of these
functions are considered.

\begin{figure}[htp]
\centering
\begin{subfigmatrix}{3}
\subfigure[$\varepsilon_{\ell^\infty}$ error in $f_2$]{\includegraphics[width=0.49\textwidth]{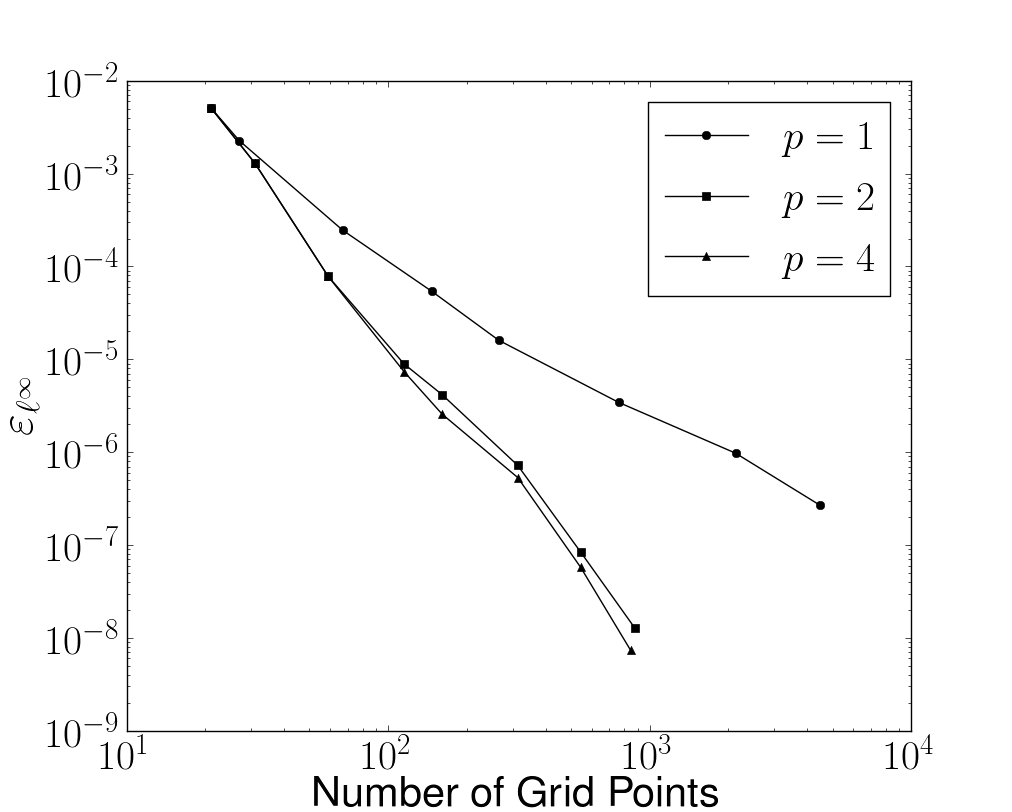}}
\subfigure[$\varepsilon_{\ell^\infty}$ error in $f_3$]{\includegraphics[width=0.49\textwidth]{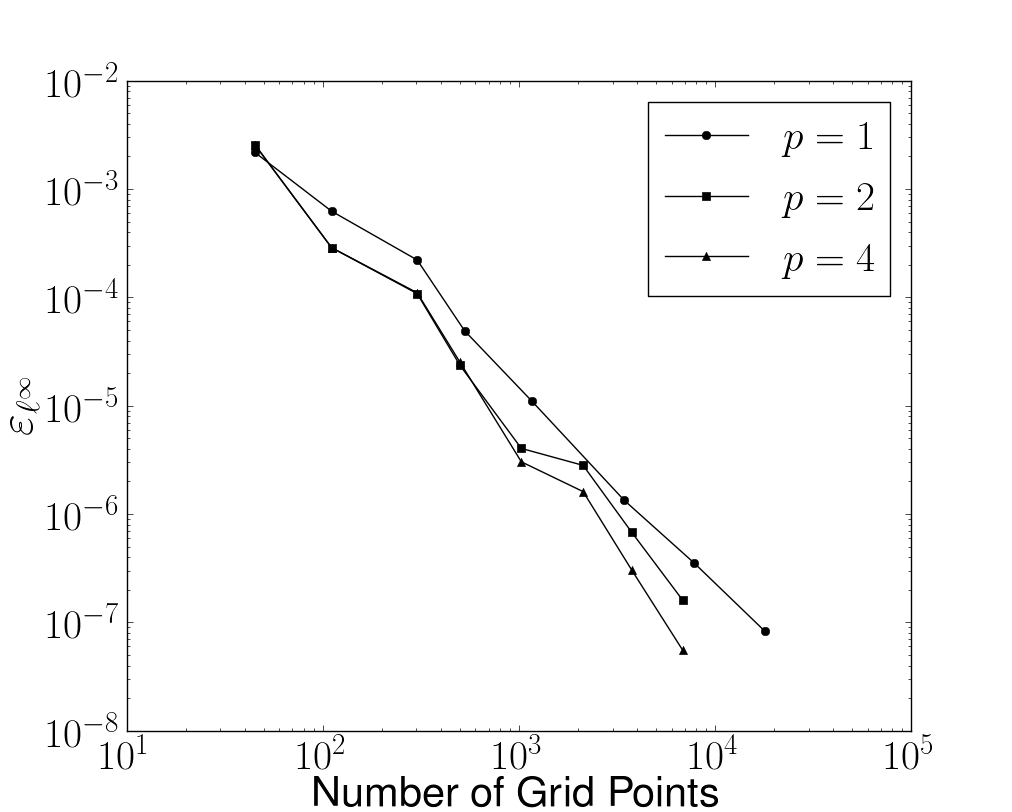}}
\subfigure[$\varepsilon_{\ell^2}$ error in $f_4$]{\includegraphics[width=0.49\textwidth]{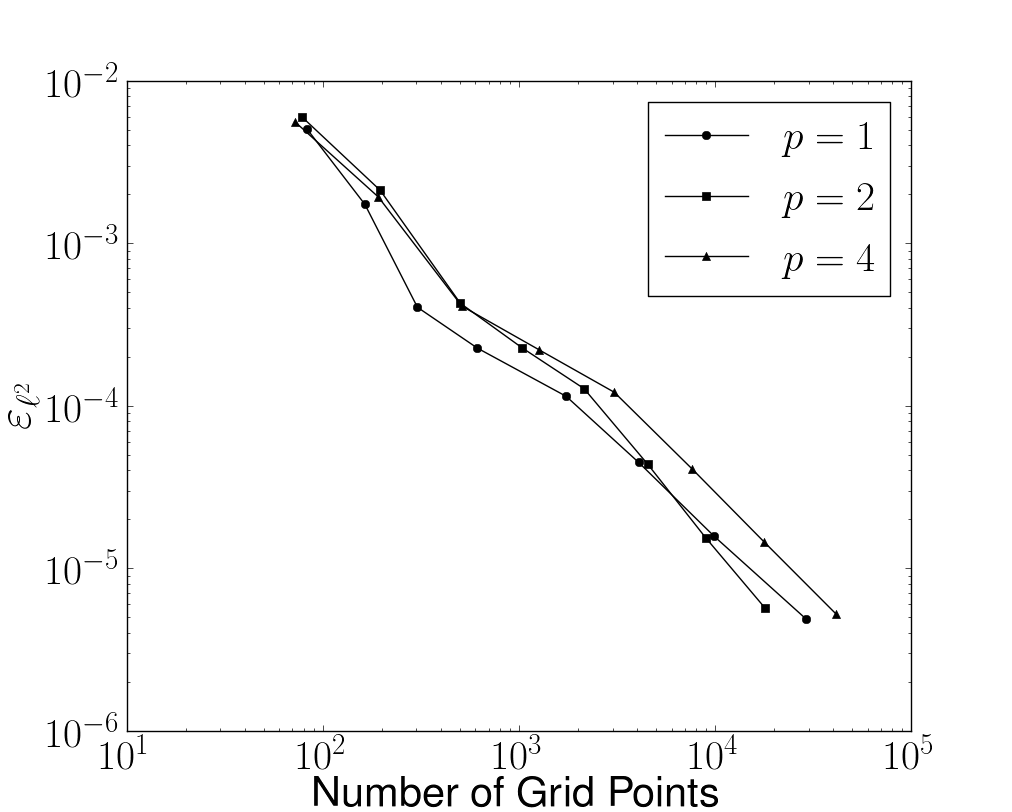}}
\end{subfigmatrix}
\caption{Error in the interpolants of $f_2$, $f_3$, and $f_4$ for $p\in\{1,2,4\}$, $\varepsilon=10^{-8}$ and $d=10$.}
\label{fig:degree-comparison}
\end{figure}

\subsection{Efficient Termination of $h$-GSG}
\label{sec:termination-comparison}
In Section~\ref{sec:efficientTermination} we proposed that the efficiency of the generalised sparse grid algorithm
can be improved by only adding admissible indices with $r_\bi>\varepsilon$ to the active index set. Here
we substantiate that claim. The $h$-GSG method discussed here and throughout this manuscript implements this 
modification.

Again consider the three test functions~\eqref{eq:f3}-\eqref{eq:f5}. But now let us investigate performance
when $d=100$ and 
\[
 c_i=\lambda\exp(-\frac{35\cdot i}{d}),\quad i=1,\ldots,d 
\]
where the parameter $\lambda$ which controls the effective dimensionality of the function.
Figures~\ref{fig:termination-condition-comparison} illustrates the difference between the
proposed method with and without the modification proposed in Section~\ref{sec:efficientTermination}.
Specifically the figures depict the error in the sparse grid interpolant as the algorithm evolves. The modification
results in substantial improvement when applied to the smooth (not depicted) and piecewise-continuous 
(Figure~\ref{fig:termination-condition-comparison}~(a)) functions. The unmodified algorithm
adds many points corresponding to grid indices with $r_\bi<\varepsilon$ and which contribute to the interpolation error.
The modification limits the number of unimportant points. 

When applied to the discontinuous function (Figure~\ref{fig:termination-condition-comparison}~(b))
the modification results in lower accuracy than when the unmodified algorithm is used. The unmodified algorithm continues to add points
belonging to grids with $r_\bi<\varepsilon$ and which contribute little to
the integral of the function yet still significantly influence the accuracy of the interpolant. The points belonging
to $r_\bi<\varepsilon$ mainly reside around the discontinuity. As the level of refinement increases, the contribution
of these points to the integral decreases yet their effect on the interpolant may not. This effect is illustrated in 
Figure~\ref{fig:termination-condition-comparison}~(c) which depicts the decrease in the error of the integral approximation with and without the
modification when $h$-GSG is applied to the discontinuous function. Here it is clear that adding grids with $r_\bi<\varepsilon$ has little effect on the accuracy of the 
integral approximation.
Also note that the effect of the termination condition decreases when the dimensionality $d$ is small.

\begin{figure}[htp]
\centering
\begin{subfigmatrix}{3}
\subfigure[$\varepsilon_{\ell^2}$ error in $f_3^{d=100}$]{\includegraphics[width=0.49\textwidth]{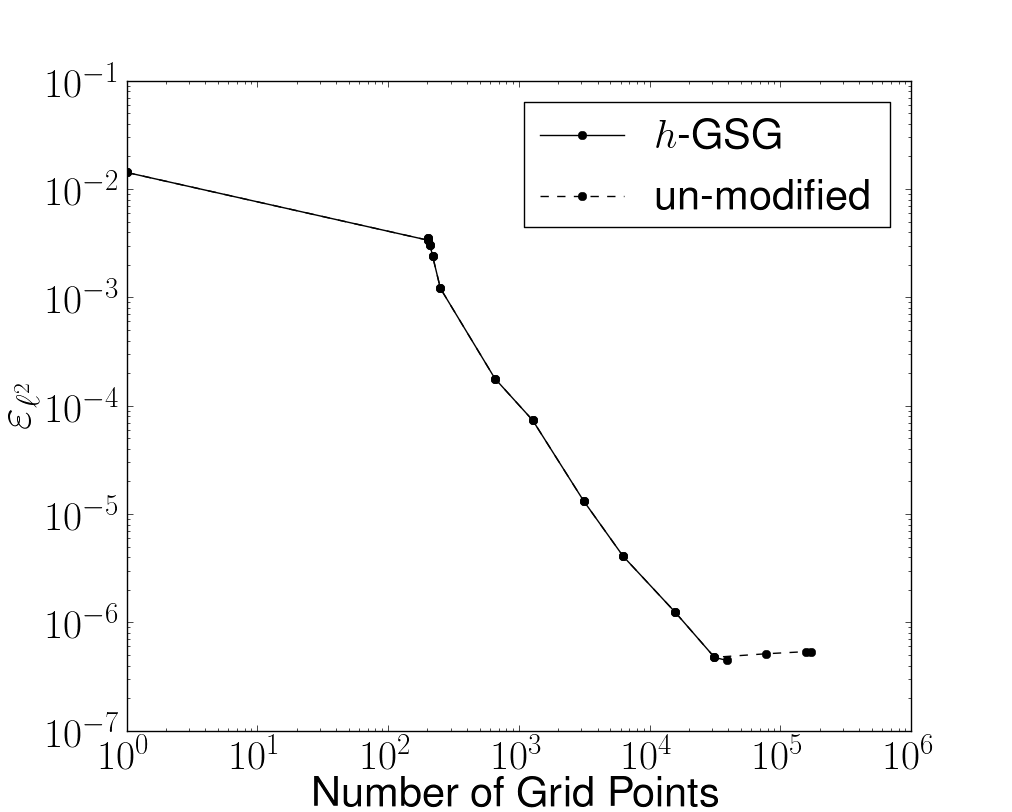}}
\subfigure[$\varepsilon_{\ell^2}$ error in $f_4^{d=100}$]{\includegraphics[width=0.49\textwidth]{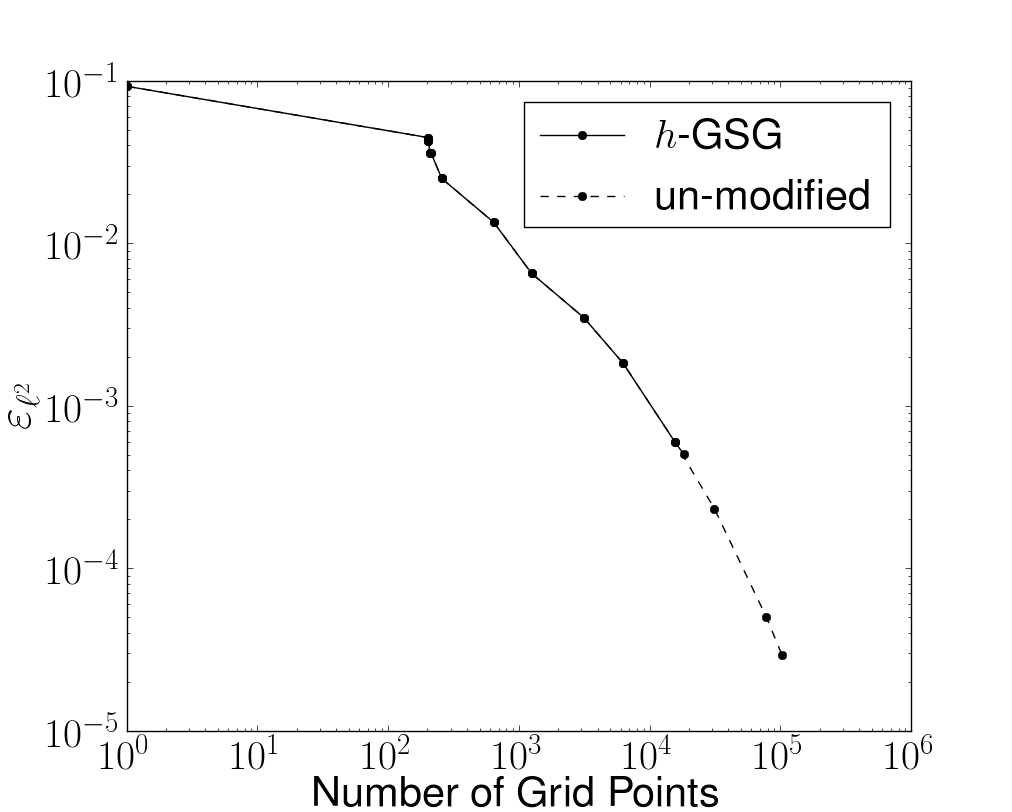}}
\subfigure[$\varepsilon_{\textrm{integral}}$ error in $f_4^{d=100}$]{\includegraphics[width=0.49\textwidth]{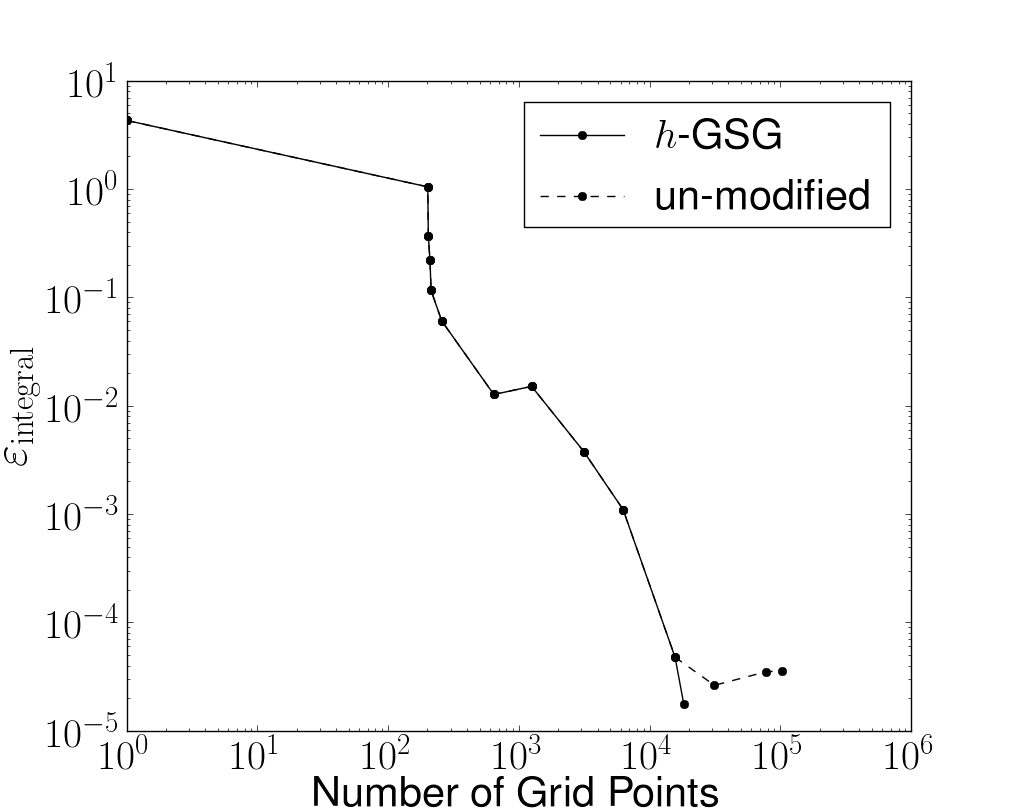}}
\end{subfigmatrix}
\caption{The evolution of the error in the interpolant when applied to the test functions with and without the proposed termination condition.
	Here $d=100$ and $\varepsilon=10^{-6}$. Initially the results are visually identical. Differences  only occur towards the end of the algorithm.}
\label{fig:termination-condition-comparison}
\end{figure}

\subsection{High-Dimensional Interpolation}
In Section~\ref{sec:termination-comparison} we used $h$-GSG for interpolation for a set of 100-dimensional problems. In this 
section we show that the proposed method can be applied to much higher-dimensional problems. Again consider the 
discontinuous function~\eqref{eq:f5} which is the most difficult function to approximate of the three test functions
used thus far. Again let
\begin{equation}
\label{eq:eigenDecay}
 c_i=\lambda\exp(-\frac{35\cdot i}{d})
\end{equation}


Table~\ref{tab:increaseDimHGSG} shows the number of function evaluations required to approximate $f_4$
and the resulting relative error in the approximated integral when $\varepsilon=10^{-4}$, quadratic basis functions are used,
and $\lambda=1$. An analytical expression for the integrand can be obtained easily due to the exponential nature of the function.
Arbitrary precision arithmetic was used to evaluate the numerical value of the reference integrals.
Due to the large range of values that $f_4$ can
take in high dimensions we use the relative error indicators
\[
  r_\bi=\left\lvert\frac{\sum_{\bj\in B_\bi} v_{\bi,\bj}\cdot w_{\bi,\bj}}{w_{\mathbf{0},\mathbf{0}}\cdot v_{\mathbf{0},\mathbf{0}}}\right\rvert,
 \quad \gamma_\bi=\left\lvert\frac{v_{\bi,\bj}\cdot w_{\bi,\bj}}{w_{\mathbf{0},\mathbf{0}}\cdot v_{\mathbf{0},\mathbf{0}}}\right\rvert
\]
to respectively guide difference space selection and local refinement. The $h$-GSG algorithm is terminated when 
\[\left\lvert \frac{\sum_{\bi\in\cI}r_\bi}{w_{\mathbf{0},\mathbf{0}}\cdot v_{\mathbf{0},\mathbf{0}}}\right\rvert<\varepsilon\]
where $r$ is the global error indicator used in Algorithm~\ref{alg:generalisedSG}.
An error of the order $10^{-2}$ is achieved for up to 700 dimensions using less than 300,000
 function evaluations. 

The accuracy of the integral approximation decays with increasing dimensionality. This 
is likely caused by the particular error indicators ($\gamma_{\bi,\bj}$ and $r_\bi$) used to guide adaptivity. 
At the moment a point $\bxi_{\bi,\bj}$
is refined if $\gamma_{\bi,\bj}\ge\varepsilon$ and a grid index $\bi$ is flagged for refinement only
if $r_\bi\ge\varepsilon$. This approach works well when $d<400$ but could be improved upon when the dimensionality
is higher. By excluding points if they have $\gamma_{\bi,\bj}$ less than the desired
accuracy $\varepsilon$ we are potentially ignoring a significant number of points whose combined contribution
to the integral is greater than $\varepsilon$. As the dimensionality increases more and more points will be excluded from
consideration thereby causing the accuracy of the approximant to decrease. 
This remark is consistent with Theorem~\ref{thm:optErr} which states that the accuracy of the 
$h$-GSG approximation is dependent on the number of points with $\gamma_{\bi,\bj}$ close to $\varepsilon$. 
As the number of these points increases the accuracy of the approximation decreases. 

The decrease in accuracy 
depicted in Table~\ref{tab:increaseDimHGSG} could be addressed by utilising more appropriate error criteria
 than those used here. 
The construction of efficient and robust error indicators is problem dependent 
and must be based upon the properties of the function under consideration. If no information on the function 
is available, we have shown that the error indicators used here will still perform well.

\begin{table}[htp]
\begin{center}
\caption{Errors in the $h$-GSG approximation of \eqref{eq:f5} for $d=100$ to $700$. $\varepsilon=10^{-5}$}
\label{tab:increaseDimHGSG}
\begin{tabular}{|c|c|c|}
\hline
$d$ & $N$ & $\varepsilon_\mathrm{integral}$\\
\hline\hline
100 & 3,376 & $3.81\cdot 10^{-4}$ \\ 
200 & 12,488 & $1.67\cdot 10^{-3}$ \\ 
300 & 31,533 & $1.71\cdot 10^{-4}$ \\ 
400 & 62,404 & $8.44\cdot 10^{-5}$ \\ 
500 & 109,356 & $4.57\cdot 10^{-3}$ \\
600 & 176,842 & $7.97\cdot 10^{-3}$ \\
700 & 269,665 & $1.68\cdot 10^{-2}$ \\
\hline
\end{tabular}
\end{center}
\end{table}

Here we note that the rate of convergence of the $h$-GSG method is governed by the implicit weighting of the importance
of each dimension. In this case the importance is controlled by the coefficients $c_i$. To illustrate this
dependence, Table~\ref{tab:increaseDimWeights} shows the efficiency of $h$-GSG as the magnitude of the coefficients
is increased. As the ``importance'' of each dimension increases the number of sub-dimensional components increases. In this
case an increase in $c_i$ also increases the function variability which also requires additional points to achieve a
set level of accuracy. 

\begin{table}[htp]
\begin{center}
\caption{Errors in the $h$-GSG approximation of \eqref{eq:f5} for increasing dimension 
importance $\varepsilon=10^{-6}$. Importance is increased by increasing $\lambda$ in Equation~\eqref{eq:eigenDecay}.}
\label{tab:increaseDimWeights}
\begin{tabular}{|c|c|c|}
\hline
$\lambda$ & $N$ & $\varepsilon_\mathrm{integral}$ \\
\hline\hline
1 & 9,226 & $1.66\cdot 10^{-4}$\\
2.5 & 34,977 & $2.96\cdot 10^{-5}$\\
5 & 175,201 & $6.53\cdot 10^{-4}$\\
7.5 & 659,368 & $1.93\cdot 10^{-3}$\\
\hline
\end{tabular}
\end{center}
\end{table}
\section{Conclusion}
This paper presented an $h$-adaptive generalised sparse grid ($h$-GSG) method 
for interpolating high-dimensional functions with discontinuities. The proposed algorithm
extends and improves upon existing approaches by combining the strengths of the generalised sparse grid
algorithm and hierarchical surplus-guided $h$-adaptivity.

The underlying generalised sparse grid algorithm greedily selects the subspaces that contribute most to 
the variability of a function. The hierarchical surplus of the points within each subspace is used as an error criterion
for $h$-refinement with the aim of concentrating computational effort within rapidly varying or discontinuous regions. 
This approach limits the number of points that are invested in `unimportant'
subspaces and regions within the high-dimensional domain.

A high-degree basis is used to obtain a high-order method that, given sufficient smoothness, performs 
significantly better than the traditional piecewise-linear basis. When discontinuities are present in the function
surface or its derivatives, performance deteriorates. However, it was shown numerically that even in such situations
the quadratic basis will still result in higher-rates of convergence than that achieved by using piecewise-linear
interpolation.

Often the importance of function variables are governed by natural yet unknown weights. 
In these cases, the proposed method can utilise this implicit weighting to determine and
restrict effort to the effective dimension of the model. This property allows the $h$-GSG method to be applied
to non-smooth functions with hundreds of variables. 

\bibliographystyle{spmpsci}

\bibliography{Collocation,SparseGrid,Misc,GPC,SensitivityAnalysis,EpistemicUncertainty,HDMR,JDJakeman,DiscontinuityDetection,Cubature}
\end{document}